\documentclass[11pt,reqno]{amsart}

\usepackage[paper,pkg/amsaddr=true,pkg/biblatex/opt={+giveninits=true,maxalphanames=6},thm/numberwithin={}]{zbs}
\usepackage{xurl}

\NewEnvironmentCopy{theorem}{thm}
\NewEnvironmentCopy{proposition}{prop}
\NewEnvironmentCopy{lemma}{lem}
\NewEnvironmentCopy{corollary}{cor}

\newcommand{\R}{\mathbb{R}}

\newcommand{\Q}{\mathbb{Q}}
\newcommand{\Z}{\mathbb{Z}}
\newcommand{\F}{\mathbb{F}}
\newcommand{\OK}{\mathcal{O}_K}
\newcommand{\Norm}{\mathrm{N}}
\newcommand{\Disc}{\Delta}
\newcommand{\cP}{\mathcal{P}}
\newcommand{\fp}{\mathfrak{p}}
\newcommand{\fa}{\mathfrak{a}}

\DeclareMathOperator{\rd}{rd}
\DeclareMathOperator{\Gal}{Gal}

\newcommand{\aiso}{\alpha_{\mathrm{iso}}}
\newcommand{\adist}{\alpha_{\mathrm{dist}}}
\newcommand{\fiso}{f_{\mathrm{iso}}}
\newcommand{\fdist}{f_{\mathrm{dist}}}

\title{The Minkowski grid has robustly many repeated distances}
\author[Sungchul Lee]{Sungchul Lee\nfts{1}}
\address{\nfts{1}Department of Mathematical Sciences, Seoul National University, Republic of Korea.}
\email{lsngchl127@snu.ac.kr}
\author[Cosmin Pohoata]{Cosmin Pohoata\nfts{2}}
\address{\nfts{2}Department of Mathematics, Emory University, United States.}
\email{cosmin.pohoata@emory.edu}
\author[Daniel G.\ Zhu]{Daniel G.\ Zhu\nfts{3}}
\address{\nfts{3}Department of Mathematics, Princeton University, United States.}
\email{zhd@princeton.edu}
\date{}
\thanks{S.L.\ was supported by the National Research Foundation of Korea (RS-2024-00342160). C.P.\ was supported by NSF grant DMS-2246659. D.Z.\ was supported by the NSF Graduate
Research Fellowship Program (grant DGE-2039656).}

\begin{document}

\begin{abstract}
We show that there exists a constant $\delta > 0$ such that for any positive integer $n$ there exists a set of $n$ points $P \subset \mathbb{R}^2$ with the following property: for every subset $A \subseteq P$ of size $|A| \geq 2$, 
\[ \max_{\lambda>0}
        \#\{(a,b)\in A \times A:
        a\ne b,\ \abs{a-b}=\lambda\} \gsim \frac{|A|^2}{n^{1-\delta}}.\]
Our result is a vertical amplification of a robust Ramanujan estimate recently established by Croot-Mao-Pohoata-Sheffer-Yip for arbitrary subsets of the ordinary square grid, and is inspired by recent constructions for the Erd\H{o}s unit distance problem and the Elekes-R\'onyai problem. 

Taking $A=P$, the inequality above gives a distance occurring $n^{1+\delta}$ times in $P$; thereby a scaled copy of $P$ is a counterexample for the unit-distance conjecture. In addition, the same inequality shows that
\begin{enumerate}
    \item all subsets of $P$ of size $\gsim n^{1-\delta}$ must contain isosceles triangles, and
    \item all subsets of $P$ of size $\gsim n^{1/2-\delta}$ must contain repeated distances. 
\end{enumerate}
These features give polynomially improved estimates for old problems of Erd\H{o}s. The existence of a set satisfying property (1) confirms a conjecture of Erd\H{o}s from 1980, whereas the existence of a set with property (2) answers a question of Conlon-Fox-Gasarch-Harris-Ulrich-Zbarsky in the negative. 
\end{abstract}

\maketitle

\section{Introduction}
\label{sec:introduction}

If $r_2(m)=\#\{(u,v)\in\Z^2:u^2+v^2=m\}$, then it is a classical number theoretic fact (see e.g.\ \cite{Ramanujan1915}) that
\begin{equation}
\label{eq:ramanujan-intro}
        \max_{1\le m\le x} r_2(m)
        \ge
        \exp\left(c\frac{\log x}{\log\log x}\right)
\end{equation}
for all sufficiently large \(x\). Erd\H{o}s famously observed \cite{Erdos46} that this implies that in the grid \([\sqrt{n}] \times [\sqrt{n}]\), there is a distance which occurs $\geq n\exp(c\log n/\log\log n)$ times, which led Erd\H{o}s to the well-known unit distance conjecture.

In \cite{CMPSY2026}, a robust form of \labelcref{eq:ramanujan-intro} was recently established. For a finite point set \(A\subset\R^2\), define
\[
        \mu(A)
        \coloneq
        \max_{\lambda>0}
        \#\{(a,b)\in A \times A:
        a\ne b,\ \abs{a-b}=\lambda\}.
\]
In other words, \(\mu(A)\) is the largest ordered-pair multiplicity of a nonzero distance
in \(A\). Both \cite[Theorem 1.3]{CMPSY2026} and \cite[Theorem 1.4]{CMPSY2026} rest upon the following estimate.

\begin{theorem}[Croot-Mao-Pohoata-Sheffer-Yip] \label{thm:cmpsy-grid-sieve-intro}
There exists an absolute constant \(c>0\) such that, for every
\(A\subseteq [\sqrt{n}]^2\) of size at least $2$,
\[\mu(A) \gsim \frac{\abs{A}^2}{n} \cdot \exp\left(c\frac{\log n}{\log\log n}\right).
\]
\end{theorem}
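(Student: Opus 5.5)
The plan is a positivity (Fourier) argument, weighted by sublattices of Gaussian integers of a fixed norm. We identify $\Z^2$ with $\Z[i]$ and write $r(v)=\#\{(a,b)\in A\times A: a-b=v\}$. Fix a squarefree integer $Q$ all of whose prime factors are $\equiv 1 \pmod 4$, and let $\Gamma_Q=\{\gamma\in\Z[i]:\Norm(\gamma)=Q\}$, so that $\abs{\Gamma_Q}=r_2(Q)$. For each $\gamma\in\Gamma_Q$, the ideal $\gamma\Z[i]$ is a sublattice $\Lambda_\gamma\subset\Z^2$ of index $Q$.

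\emph{Step 1: a lower bound from positivity.} By Poisson summation on $\Z^2/\Lambda_\gamma$,
\[
\sum_{v\in\Lambda_\gamma} r(v)=\frac1Q\sum_{\xi\in\Lambda_\gamma^*/\Z^2}\Bigl|\sum_{a\in A}e(\langle a,\xi\rangle)\Bigr|^2\ \ge\ \frac{\abs{A}^2}{Q},
\]
keeping only the $\xi=0$ term. The diagonal $v=0$ contributes exactly $\abs{A}$. Summing over $\gamma\in\Gamma_Q$ gives
\[
\sum_{\gamma\in\Gamma_Q}\ \sum_{0\ne v\in\Lambda_\gamma} r(v)\ \ge\ r_2(Q)\Bigl(\frac{\abs{A}^2}{Q}-\abs{A}\Bigr)\ \ge\ \frac{r_2(Q)\abs{A}^2}{2Q},
\]
provided $Q\le \abs{A}/2$.

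\emph{Step 2: few distances.} Every nonzero $v\in\Lambda_\gamma$ with $r(v)>0$ has the form $v=\gamma t$ with $\abs{v}^2=Q\abs{t}^2\le 2n$. Hence $\abs{v}^2$ ranges over at most $2n/Q$ values. Pigeonholing, some $\lambda$ receives weighted mass at least $r_2(Q)\abs{A}^2/(4n)$, where each pair $(a,b)$ with $\abs{a-b}=\lambda$ is counted with multiplicity $m(a-b)=\#\{\gamma\in\Gamma_Q:\gamma\mid a-b\}$.

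\emph{Step 3: controlling the multiplicity.} This is the main obstacle: to conclude $\mu(A)\gsim \frac{\abs{A}^2}{n}r_2(Q)^{1-o(1)}$ we must show that the multiplicity $m$ does not eat the gain. The first approach I would try is to choose $Q=p_1\cdots p_k$, a product of the first $k$ split primes. Since $\Z[i]$ is a UFD and $Q$ is squarefree, for $\gamma\mid v$ the associate class of $\gamma$ is determined by choosing, for each $p_j$, which of the two conjugate primes above $p_j$ divides $v$. So $m(v)\le 4\cdot 2^{\#\{j:\ p_j \mid v\ \text{in }\Z[i]\text{ with both conjugates dividing}\}}$, i.e.\ $m(v)$ is large only when $p_j^2\mid \abs{v}^2$ for many $j$. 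I would bound the contribution of such $v$ using the trivial estimate $\sum_{v\in p^2\text{-divisible lattice}} r(v)\le \abs{A}\cdot\#\{\text{such }v\text{ near }a\}$, or alternatively restrict to $\gamma$ and $t$ with $\gcd(t,\bar\gamma)=1$, which makes the representation $v=\gamma t$ essentially unique up to units. If the multiplicity cannot be made $O(1)$, a second-moment (Cauchy--Schwarz) bound on $\sum m(v)^2$ should still lose only a factor $2^{O(k)/\text{const}}$, which can be absorbed by shrinking $c$.

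\emph{Step 4: choice of parameters.} Since $r_2(Q)=4\cdot 2^k$ and $\log Q\asymp k\log k$, we have $r_2(Q)\ge\exp(c\log Q/\log\log Q)$. We take $Q$ as large as possible subject to $Q\le\abs{A}/2$. If $\abs{A}\ge n^{1/2}$, then $\log Q\gg\log n$ and the bound follows. If $\abs{A}^2\le n\exp(-c\log n/\log\log n)$, the statement is trivial because $\mu(A)\ge 2$. The intermediate range $n^{1/2}e^{-c\log n/\log\log n}\le\abs{A}\le n^{1/2}$ still gives $\log Q\gg\log n$, so all cases are covered after adjusting $c$.
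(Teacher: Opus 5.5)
Your Steps 1, 2 and 4 are sound, and Step 1 is the paper's argument. The lattices $\gamma\Z[i]$, $\gamma\in\Gamma_Q$, are exactly the lattices $L_\eps$ of \cref{lem:sieve} with $K=\Q$ (each counted four times because of units), and your Poisson positivity is the Cauchy--Schwarz over cosets used there.

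The genuine gap is Step 3. It is only a list of things to try, and none of them works for an arbitrary $A$. Because you pigeonholed with uniform weights in Step 2, the $\lambda$ you obtain may have $w(\lambda)=\#\{j:p_j^2\mid\lambda\}$ close to $k$. Dividing by $m\le 4\cdot 2^{w(\lambda)}$ then wipes out the factor $2^k$.

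\begin{itemize}
\item The trivial estimate bounds the pairs with $a-b\in\gamma\Z[i]$ and $p_j\mid a-b$ only by about $\abs{A}n/(Qp_j)$. This is negligible against the main term $\abs{A}^2/Q$ only when $\abs{A}\gg n/p_j$, i.e.\ for sets of positive density in the grid. The theorem, however, has content down to $\abs{A}=n^{1/2-o(1)}$.
\item Restricting to $\gcd(t,\bar\gamma)=1$, or bounding $\sum m^2$, both require \emph{upper} bounds on the number of pairs whose difference lies in a sublattice. Positivity only supplies lower bounds.
\end{itemize}

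For instance, let $Q'=p_1\cdots p_{k'}$ with $Q'^2\le n/\abs{A}$, and let $A$ lie in a single coset of $Q'\Z[i]$. Then your restricted count is zero for every $\gamma$, and $m(a-b)\ge 4\cdot 2^{k'}$ whenever it is nonzero. What rescues the bound there is only that all squared distances are multiples of $Q'^2$.

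The missing idea is to pigeonhole \emph{with} the weights rather than try to control $m$. Your bound $m(v)\le 4\cdot 2^{w(\abs{v}^2)}$ depends only on $\lambda=\abs{v}^2$. Write $N(\lambda)\le\mu(A)$ for the number of ordered pairs at squared distance $\lambda$. For $Q\mid\lambda$, the weight $2^{w(\lambda)}$ counts the common divisors $e$ of $\lambda/Q$ and $Q$. Hence
\[
\frac{r_2(Q)\abs{A}^2}{2Q}\le\sum_{a\ne b}m(a-b)\le 4\sum_{\substack{0<\lambda\le 2n\\ Q\mid\lambda}}2^{w(\lambda)}N(\lambda)\le 4\mu(A)\sum_{e\mid Q}\Bigl\lfloor\frac{2n}{Qe}\Bigr\rfloor\le\frac{8n\,\mu(A)}{Q}\prod_{j=1}^k\Bigl(1+\frac{1}{p_j}\Bigr).
\]
This gives $\mu(A)\ge\frac{\abs{A}^2}{4n}\prod_j\frac{2p_j}{p_j+1}\ge\frac{\abs{A}^2}{4n}(5/3)^k$, which is essentially the $K=\Q$ case of \cref{lem:sieve}. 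Your Step 4 then finishes the proof.

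The reason this works is that large weights $2^{w(\lambda)}$ live on sparse sets of $\lambda$. Their total therefore exceeds the number of multiples of $Q$ in $[1,2n]$ only by the harmless factor $\prod_j(1+1/p_j)$. This is also what absorbs the coset configurations above automatically.
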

Since distances in the grid lie in the set $\sqrt{1},\sqrt{2},\ldots,\sqrt{2n}$, we immediately have $\mu(A) \gsim \abs{A}^2/n$ by pigeonhole. \cref{thm:cmpsy-grid-sieve-intro} states that for \emph{every} subset of the grid $[\sqrt{n}]^2$, not just the grid itself, this bound can be sharpened by the same Ramanujan-type factor. In \cite{CMPSY2026}, this was proven using a new combinatorial large sieve method and was used to establish new estimates for subsets of $[\sqrt{n}]^2$ without repeated distances or without isosceles triangles, improving on classical results. We refer to \cite{CMPSY2026} for more context.

Recent breakthrough work from OpenAI \cite{OpenAIUnitDistanceBlog,UnitDistanceRemarks} constructed grids of $n$ points, derived from number fields of high degree, containing a distance which occurs $\geq n^{1+\delta}$ times, disproving Erd\H{o}s's unit distance conjecture. Roughly speaking, in the language from \cite{PohoataER}, the core insight was that the one-dimensional number theoretic phenomenon behind \labelcref{eq:ramanujan-intro} can be vertically amplified by passing to arbitrary number fields. Furthermore, if the number fields are chosen carefully, it can be more efficient to increase the degree than the horizontal width of the construction. This horizontal vs.\ vertical amplification principle has since been used (implicitly or explicitly) to find improved constructions for other longstanding problems in combinatorial number theory, such as the sum-product problem over the reals \cite{BSSZ2026} and the Elekes-R\'onyai problem \cite{PohoataER}. See the latter and, more recently, also \cite{TaoBlog} for more systematic discussions about amplification. In \cite{PohoataER} and \cite{CMPSY2026}, the authors also outlined a more general heuristic for possibly generating other constructions by vertically amplifying some of the recent new results from \cite{CMPSY2026}.

The purpose of this note is to use such a strategy to sharpen \cref{thm:cmpsy-grid-sieve-intro}. Our main result is that grids constructed from number fields of high degree can serve as a robust counterexample to the unit distance conjecture:
\begin{theorem}
\label{thm:robust-ramanujan}
There exists a constant $\delta > 0$ such that for any positive integer $n$ there exists an $n$-element set \(P\subset\R^2\) with the following property: for every subset
\(A\subseteq P\) of size at least $2$, we have
\[\mu(A) \gsim \frac{\abs{A}^2}{n^{1-\delta}}.\]
\end{theorem}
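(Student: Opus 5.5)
The plan is to combine the OpenAI-type number field construction with a pigeonhole/sieve argument over ideal factorizations, in the spirit of \cref{thm:cmpsy-grid-sieve-intro}.

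\textbf{Construction.} Let \(K\) be a CM field of degree \(2d\) whose maximal real subfield is \(K^+\). Assume the extension \(K/K^+\) has a complex embedding \(\sigma\) with \(\sigma(\bar x)=\overline{\sigma(x)}\). We choose \(K\), following the class field tower constructions behind \cite{OpenAIUnitDistanceBlog,UnitDistanceRemarks}, so that the root discriminant \(\rd(K)\) stays bounded as \(d\to\infty\) and many small primes of \(K^+\) split in \(K\). Take \(P\) to be the set of \(\sigma(x)\), regarded as points of \(\R^2\cong\mathbb{C}\), where \(x\in\OK\) ranges over elements all of whose conjugates have size at most \(R\). We tune \(R\) so that \(|P|\approx n\); by Minkowski, \(n\approx R^{2d}\cdot|\Disc_K|^{-1/2}\). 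The squared distance between \(\sigma(x)\) and \(\sigma(y)\) is \(\sigma\bigl(\Norm_{K/K^+}(x-y)\bigr)\). Hence it suffices to count repeated values of \(\Norm_{K/K^+}(x-y)\in\mathcal{O}_{K^+}\). Collisions in \(K^+\) force collisions in \(\R\), and \(\sigma\) restricted to \(K^+\) is injective.

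\textbf{Counting for arbitrary \(A\).} For \(A\subseteq P\), the differences \(x-y\) with \(x\neq y\in A\) number \(|A|^2-|A|\gsim|A|^2\). Each difference lies in a box of totally bounded elements of \(\OK\). Its relative norm \(\beta=(x-y)\overline{(x-y)}\) is a totally positive element of \(\mathcal{O}_{K^+}\) with all conjugates at most \(4R^2\). Let \(N\) be the number of such totally positive \(\beta\). By pigeonhole, some \(\beta\) is hit at least \(|A|^2/N\) times, which gives \(\mu(A)\gsim|A|^2/N\). It therefore suffices to show \(N\lesssim n^{1-\delta}\). This is a horizontal-versus-vertical comparison. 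The elements of \(\mathcal{O}_{K^+}\) in the box \([0,4R^2]^d\) number about \((4R^2)^d/|\Disc_{K^+}|^{1/2}\). The relevant comparison is \(|\Disc_K|=|\Disc_{K^+}|^2\cdot\Norm(\mathfrak d_{K/K^+})\). If we pick \(R\) as a suitable constant and \(\rd(K)\) large compared to \(\rd(K^+)\), then \(N\approx n^{1-\delta}\). This is essentially the OpenAI computation: \(n\approx R^{2d}/\rd(K)^{d}\) and \(N\approx 4^dR^{2d}/\rd(K^+)^{d/2}\).

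\textbf{Refinement.} If the naive count is insufficient, the fallback is a multiplicative one. Write \(\beta\) as \(\Norm_{K/K^+}(\gamma)\), restrict to norms, and use that the number of norms is smaller than \(N\) by the density of split primes. For every \(A\), all norms land among the \(\beta\) that are norms from \(K\) and have bounded conjugates. One then needs an upper bound on the number of such totally positive relative norms, using a Landau-type count: norms are divisible only by split or ramified primes to odd powers. This gives a saving of the form \(\exp(-c\,d)\) in \(N\) relative to \(n\).

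\textbf{Main obstacle.} The hard step is choosing the tower \(K\supset K^+\) so that all the requirements hold at once. We need \(\rd(K)\) bounded, \(\rd(K^+)\) comparatively large or the norms sparse, and the lattice box to genuinely contain \(\approx n\) points. The last point requires the box size \(R\) to exceed the covering radius, which is about \(\rd(K)^{1/2}\). One further needs that, after passing to all \(n\), interpolating between degrees loses only constants. I would first try to reuse exactly the field families from \cite{UnitDistanceRemarks}, verifying the pigeonhole version of their estimate. I would handle intermediate \(n\) by taking \(P\) to be a subset of a suitable grid, since the inequality \(\mu(A)\gsim|A|^2/N\) is inherited by subsets.
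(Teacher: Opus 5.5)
There is a genuine gap in the central step. No argument of the form ``all squared distances lie in a fixed set of $N$ values, hence $\mu(A)\gsim|A|^2/N$'' can prove the theorem, with or without your refinement to relative norms. Whatever set of $\beta$'s you pigeonhole over, its image under $\sigma$ contains every squared distance determined by $P$ itself. So $N$ is at least the number of distinct distances of $P$, which is $\gsim n/\log n$ by the Guth--Katz theorem. The best this method can ever give is therefore $\mu(A)\gsim|A|^2\log n/n$, and the target $N\lsim n^{1-\delta}$ is unattainable for any field, box, or restriction. (Already for $[\sqrt n]^2\subset\Z[i]$, pigeonholing over values gives only the trivial $|A|^2/n$, and restricting to sums of two squares gains only Landau's $\sqrt{\log n}$.)

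Your heuristic arithmetic also runs the wrong way. Your own estimates give $N/n\approx 4^d\rd(K)^d/\rd(K^+)^{d/2}$. The relation $|\Disc_K|=|\Disc_{K^+}|^2\Norm(\mathfrak d_{K/K^+})$ gives $\rd(K)=\rd(K^+)\Norm(\mathfrak d_{K/K^+})^{1/(2d)}\ge\rd(K^+)$. Hence $N/n\gsim(4\rd(K^+)^{1/2})^d$, which is exponentially \emph{large}. Taking $\rd(K)$ large relative to $\rd(K^+)$ only makes this worse, and $\rd(K^+)$ can never be large relative to $\rd(K)$.

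The missing idea is that the gain has to come from pairs \emph{concentrating} on a small structured set of values. The paper gets this from a sieve with a tunable number of primes. Fix $k$ primes $p_i\equiv1\pmod 4$ splitting completely in a totally real tower with $\rd\le D$ independent of $k$, and write $(p_i)=\fp_{i,1}\cdots\fp_{i,d}$ and $Q=p_1\cdots p_k$. Each of the $2^{kd}$ sign patterns $\varepsilon$ gives a sublattice $L_\varepsilon=\{(x,y)\in\OK^2: x\equiv\varepsilon_{i,j}\iota_i y \pmod{\fp_{i,j}}\}$ of index $Q^d$. Cauchy--Schwarz over its cosets yields at least $|A|^2/(2Q^d)$ pairs $a\neq b$ with $a-b\in L_\varepsilon$, once $|A|\ge 2Q^d$.
\begin{itemize}
\item Every such pair has $Q\mid q(a,b)$.
\item A given pair is counted for at most $2^{w(q(a,b))}$ patterns, where $w$ counts the $\fp_{i,j}$ whose square divides $q(a,b)$.
\item The packing bound of \cref{lem:minkowski-box} controls $\sum_{r\in Q\OK\cap B_K(8X^2)}2^{w(r)}$.
\end{itemize}
The outcome is $\mu(A)\ge\frac{|A|^2}{2}\left(\frac{1}{17X^2}\prod_i\frac{2p_i}{p_i+1}\right)^d$. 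Since $\prod_i\frac{2p_i}{p_i+1}\ge(5/3)^k$, choosing $k$ large makes it exceed a constant multiple of $D$. This turns the discriminant loss into a net gain of $2^d=n^{\Omega(1)}$, and it is exactly why \cref{prop:tower} needs $D$ independent of the split primes. Sets with $|A|<(2Q)^d$ are handled by $\mu(A)\ge1$.

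Your CM construction could be rescued by the same sieve. Take $K^+$ from \cref{prop:tower} and $K=K^+(i)$. Each prime $\fp\mid p_i$ of $K^+$ splits as $\mathfrak P\overline{\mathfrak P}$ in $K$, and $\gamma\bar\gamma\in\fp$ forces $\gamma\in\mathfrak P\cup\overline{\mathfrak P}$, two sublattices of index $p_i$. But the pigeonhole-over-values step must be replaced by this sieve.
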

(Though quantitative aspects of the construction from \cite{OpenAIUnitDistanceBlog} have been getting a lot of attention lately \cite[\href{https://teorth.github.io/optimizationproblems/constants/84a.html}{84a}]{Sawin2026,TaoOptimization84a}, we do not attempt here to optimize the value of $\delta$.)

By setting $A=P$, we find that $\mu(P)\gsim n^{1+\delta}$, so an appropriately scaled version of $P$ contains $\gsim n^{1+\delta}$ unit distances. This on its own should not be a surprise:
as in the OpenAI construction, the set $P$ from \cref{thm:robust-ramanujan} is also a two-dimensional \emph{Minkowski grid} obtained from a symmetric box in the ring of integers of a high-degree totally real number field. Curiously, our proof in this special case differs somewhat from \cite{OpenAIUnitDistanceBlog,UnitDistanceRemarks} and may be of independent interest; notably, it bypasses all discussion of the class group. Our main point, however, is that our set $P$ has a few other new interesting features.

\subsection*{Applications}
For a finite point set \(P\subset\R^2\), write
\[
        \aiso(P)
        \coloneq
        \max\{|A|:A\subseteq P,\ A\text{ contains no isosceles triangle}\}.
\]
Throughout, degenerate isosceles triangles (three equally
spaced collinear points) are also forbidden.  Equivalently,
\(\aiso(P)\) is the independence number of the 3-uniform hypergraph on \(P\)
whose edges are pairwise distinct triples of points $a,b,c\in P$ with $|a-b|=|a-c|$. Let $\fiso(n)\coloneq \min_{|P|=n}\aiso(P)$. Thus \(\fiso(n)\) is the largest integer \(m\) such that every \(n\)-point planar set contains an \(m\)-point subset with no isosceles triangle. 

Similarly, define
\[
        \adist(P)
        \coloneq
        \max\{|A|:A\subseteq P,\ A\text{ determines no repeated distance}\},
\]
where no repeated distance means that the distances determined by unordered
pairs of distinct points of \(A\) are all distinct.  Let $\fdist(n)\coloneq \min_{|P|=n}\adist(P)$.

Determining the asymptotics of $\fiso(n)$ and $\fdist(n)$ are old problems of Erd\H{o}s \cite{Erdos1980Survey,Erd57} (see also \cite[\S 5.3]{BMP}, \cite[\S 6]{Sheffer2014}, and \cite[\href{https://www.erdosproblems.com/1207}{\#1207} and \href{https://www.erdosproblems.com/1208}{\#1208}]{Bloom}). The closely related problem of determining $\aiso(P)$ and $\adist(P)$ for $P$ the $\sqrt{n} \times \sqrt{n}$ square grid has also been studied \cite{ErGu70,LefmannThiele95,ChartonEllenbergWagnerWilliamson2024}, and more recently in \cite{CMPSY2026}.

It is clear that if $A$ contains no isosceles triangle, then $\mu(A) \leq \abs{A}$, and that if $A$ determines no repeated distance, then $\mu(A) = 2$. Thus \cref{thm:cmpsy-grid-sieve-intro} readily implies the following upper bounds for $\fiso(n)$ and $\fdist(n)$, which were previously the best known:
\[\fiso(n) \lsim n \exp\left(- c \log n / \log \log n\right)\quad \text{and}\quad \fdist(n) \lsim n^{1/2} \exp\left(- c \log n / \log \log n\right).\]

\cref{thm:robust-ramanujan} gives polynomial improvements for both problems.
\begin{corollary}
\label[corollary]{cor:simultaneous}
There is an absolute constant \(\delta>0\) such that, for every sufficiently large
\(n\), there is a single set of $n$ points \(P\subset\R^2\) simultaneously satisfying all three of the following properties:
\begin{enumerate}
\item There is a positive distance attained by \(\geq n^{1+\delta}\) unordered pairs
of points of \(P\).
\item Every subset \(A\subseteq P\) with \(|A|\ge n^{1-\delta}\) contains an
isosceles triangle.
\item Every subset \(A\subseteq P\) with \(|A|\ge n^{1/2-\delta}\) determines a repeated distance.
\end{enumerate}
\end{corollary}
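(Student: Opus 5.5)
The plan is to deduce \cref{cor:simultaneous} directly from \cref{thm:robust-ramanujan}. Fix the constant $\delta_0>0$ and the set $P$ ($|P|=n$) from \cref{thm:robust-ramanujan}, together with an implied constant $C$, so that $\mu(A)\ge C^{-1}|A|^2/n^{1-\delta_0}$ for every $A\subseteq P$ with $|A|\ge 2$. We then take $\delta=\delta_0/3$, say, and use $n$ sufficiently large to absorb $C$ into a small power of $n$.

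\textbf{Property (1).} Take $A=P$. Then $\mu(P)\ge C^{-1}n^{1+\delta_0}$. Since $\mu$ counts ordered pairs, the number of unordered pairs is $\mu(P)/2\ge (2C)^{-1}n^{1+\delta_0}$. This is at least $n^{1+\delta}$ once $n^{\delta_0-\delta}\ge 2C$.

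\textbf{Property (2).} Suppose $A\subseteq P$ has no isosceles triangle. Fix a distance $\lambda$ and a point $a\in A$. If two distinct points $b,c\in A$ satisfied $|a-b|=|a-c|=\lambda$, then $a,b,c$ would form an isosceles triangle (possibly a degenerate one, which is also forbidden). So each $a$ contributes at most one ordered pair, and $\mu(A)\le |A|$. Combining this with the theorem gives $C^{-1}|A|^2/n^{1-\delta_0}\le |A|$, that is, $|A|\le C n^{1-\delta_0}$. For large $n$ this is less than $n^{1-\delta}$. Hence every subset of size at least $n^{1-\delta}$, which has size at least $2$, contains an isosceles triangle.

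\textbf{Property (3).} If $A$ determines no repeated distance and $|A|\ge2$, then each distance is realized by exactly one unordered pair, so $\mu(A)=2$. The theorem then gives $|A|^2\le 2C n^{1-\delta_0}$, so $|A|\le \sqrt{2C}\,n^{1/2-\delta_0/2}$. This is less than $n^{1/2-\delta}$ for large $n$, since $\delta<\delta_0/2$.

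The only point needing care is that a single $P$ must serve all three properties at once. This holds because \cref{thm:robust-ramanujan} provides one set that works for every subset simultaneously. There is no real obstacle beyond choosing $\delta<\delta_0/2$ and taking $n$ large enough to absorb the implied constants.
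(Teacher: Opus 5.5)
Your proposal is correct and follows the paper's route. The paper derives the corollary from \cref{thm:robust-ramanujan} in the same way: take $A=P$ for property (1), and use $\mu(A)\le|A|$ for isosceles-free sets and $\mu(A)=2$ for sets without repeated distances. Your choice $\delta=\delta_0/3$, absorbing the implied constant and halving for ordered versus unordered pairs for large $n$, supplies the bookkeeping the paper leaves implicit.
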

In particular, it follows that $\fiso(n) \leq \aiso(P) \lsim n^{1-\delta}$ and $\fdist(n)\leq \adist(P) \lsim n^{1/2-\delta}$. The first result confirms a conjecture by Erd\H{o}s from 1980 \cite[p.~110]{Erdos1980Survey}, while the second answers a question of Conlon, Fox, Gasarch, Harris, Ulrich, and Zbarsky from \cite{ConlonFoxGasarchHarrisUlrichZbarsky15}, who specifically asked whether $\fdist(n) = n^{1/2-o(1)}$. 

These results imply a polynomial separation from the one-dimensional analogues of $\fiso(n)$ and $\fdist(n)$, which are known to be $n^{1-o(1)}$ and $\Theta(n^{1/2})$. Specifically, Koml\'os, Sulyok, and Szemer\'edi \cite{KSS} showed that every $n$-element set $P \subset \mathbb{R}$ contains a subset of size $\gsim r_{3}(n)$ with no three-term arithmetic progressions and a subset of size $\gsim s(n)$ that is Sidon. Here $r_{3}(n)$ denotes the size of the largest subset of $\left\{1,2,\ldots,n\right\}$ without three-term arithmetic progressions and $s(n)$ denotes the size of the largest Sidon subset of $\left\{1,2,\ldots,n\right\}$. On the other hand, it is well-known that $r_3(n) = n^{1-o(1)}$ \cite{Beh,Roth} and $s(n) = (1+o(1))n^{1/2}$ \cite{Singer38, ErdosTuran41}.

Last but not least, we would like to note that the existence of point sets satisfying properties like (2) and (3) is philosophically related to the phenomenon from \cite{BaloghSolymosi2018}, where Balogh and Solymosi use hypergraph containers to build point sets with no four points collinear but with the property that every large subset contains a collinear triple.  The present construction again produces a geometric $3$-uniform hypergraph with polynomially small independent sets, though in this case the condition is metric and not projective.

\subsection*{Proof sketch}
The proof of \cref{thm:cmpsy-grid-sieve-intro} relies on the fact that for
\(p\equiv1\pmod4\), the congruence $x^2+y^2\equiv0\pmod p$ factors as $(x + \iota y)(x - \iota y) \equiv 0 \pmod p$, where $\iota$ is a square root of $-1$ modulo $p$. Given a set $A \subseteq [\sqrt{n}] \times [\sqrt{n}]$, consider the map $A \to \setf_p$ given by sending $(x,y) \mapsto x+\iota y$. If $a$ and $b$ map to the same element, then $\abs{a-b}^2$ is divisible by $p$, and by Cauchy-Schwarz this happens for at least $\abs{A}^2/p$ pairs $(a,b)$, where the possibility that $a = b$ can be ignored if $\abs{A}$ is much larger than $p$. If we apply similar logic to the map $(x,y) \mapsto x - \iota y$ and, for simplicity, assume that these events are effectively disjoint, the squared distance $\abs{a-b}^2$ is divisible by $p$ roughly $2/p$ of the time, instead of the na\"ive guess of $1/p$. By generalizing this argument to multiple $1$ mod $4$ primes $p_1,p_2,\ldots,p_k$, one can show that $\abs{a-b}^2$ is divisible by $Q \coloneq p_1p_2\cdots p_k$ roughly $2^k/Q$ of the time, which suggests a bound of $\mu(A) \gsim 2^k \abs{A}^2/n$ after pigeonholing. However, for us to ignore the cases where $a = b$, we need $\abs{A}$ to be at least on the order of $Q$, which forces $k \asymp \log n/\log \log n$.

Here, we instead fix some (large) number of $1$ mod $4$ primes $p_1,p_2,\ldots,p_k$, and consider grids over number fields $K$ of growing degree $d$ such that $\abs{\Delta_K}^{1/d}$ is bounded (independently of $k$) and all $p_i$ split completely in $K$. If $(p_i)$ splits as $\fp_{i,1}\fp_{i,2} \cdots \fp_{i,d}$, then the congruence $x^2 + y^2 \equiv 0 \pmod{p_i}$ over $\OK$ is equivalent to $(x+\iota y)(x-\iota y) \equiv 0 \pmod {\fp_{i,j}}$ for all $j$, where $\iota$ is again a square root of $-1$. A similar argument then yields that for $a, b \in A$, the squared distance $\abs{a-b}^2$ is divisible by $Q$ roughly $2^{kd}/Q^d$ of the time. Therefore we should expect $\mu(A) \gsim 2^{kd} \abs{A}^2/n$, which yields a polynomial improvement as $n$ is exponential in $d$. Working with the number field incurs losses, but such losses are polynomial in the discriminant, and by choosing $k$ to be sufficiently large at the start the term $2^{kd}$ overpowers any power of $\abs{\Delta_K}$.

\subsection*{Acknowledgments}
We would like to thank Ernie Croot, David Conlon, Junzhe Mao, Oliver Roche-Newton, Will Sawin, Adam Sheffer, Jozsef Solymosi, and Kyle Yip for helpful discussions. 

We would also like to acknowledge the usage of AI in preparation of this manuscript. The collaboration and present paper started with the first author's manuscript from \cite{LeeGithub}, who, independently of the other authors, used ChatGPT to discover the existence of a set of $n$ points $P$ in $\mathbb{R}^{2}$ with the property that every subset \(A\subseteq P\) of size \(|A|\ge n^{1/2-\delta}\) determines a repeated distance. This established that $\fdist(n)\lsim n^{1/2-\delta}$. The proof from \cite{LeeGithub} combined the new local-to-global combinatorial sieve from \cite{CMPSY2026} with the number field towers from \cite{OpenAIUnitDistanceBlog}. This general idea was first introduced by the second author in the recent counterexample for the Elekes-R\'onyai problem \cite{PohoataER}, and subsequently also discussed in the concluding remarks from \cite{CMPSY2026}, which claimed that the ideas from \cite{PohoataER} can be used to show that $\fdist(n)\lsim n^{1/2-\delta}$, as well as $\fiso(n)\lsim n^{1-\delta}$.

The fact that one can establish both of these results through a single construction, which also serves as a counterexample for the unit distance conjecture (and with a simpler overall analysis), is entirely new and was the main reason for writing this separate joint note.

\section{Algebraic Number Theory}
\label{sec:arithmetic-input}

For a number field \(K\), let \(\Disc_K\) denote its
discriminant (which is positive if $K$ is totally real) and let $\rd(K)=\abs{\Disc_K}^{1/[K:\Q]}$ denote its root discriminant.

We will use the following input as a black box, which is a mild strengthening of \cite[Proposition~2.3]{UnitDistanceRemarks}.

\begin{proposition}
\label[proposition]{prop:tower}
There exists an infinite tower of totally real fields
$
        \Q=K_0\subset K_1\subset K_2\subset\cdots
$
such that
\begin{itemize}
    \item For all $i \geq 0$, we have $[K_i : \setq] = 2^i$.
    \item All root discriminants $\rd(K_i)$ are bounded by some constant $D$.
    \item There is an infinite set $\cP$ of $1$ mod $4$ (rational) primes splitting completely in every $K_i$.
\end{itemize}
\end{proposition}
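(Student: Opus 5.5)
The plan is to build the tower as an infinite class field tower, following the Golod--Shafarevich strategy. I take this to be what lies behind \cite[Proposition~2.3]{UnitDistanceRemarks}, with an added local splitting condition. The argument has three steps.

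\textbf{Step 1: a base field with an infinite $2$-tower.} Fix a totally real base field $F$ in which a chosen finite set $S$ of primes $\equiv 1 \bmod 4$ splits completely, and whose $2$-class field tower is infinite. To get an infinite tower, I would take $F$ to be a real multiquadratic field $\Q(\sqrt{q_1},\dots,\sqrt{q_m})$. The $q_j$ are primes chosen by Chebotarev (or quadratic reciprocity) so that every prime of $S$ is a square modulo each $q_j$; then each prime of $S$ splits completely in $F$. Taking $m$ large makes the number of ramified primes large. Genus theory then makes the $2$-rank of the narrow class group exceed $2+2\sqrt{r_1+r_2+1}$, and I would use the refined Golod--Shafarevich criterion in its version that counts ramification with totally positive units. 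This gives an infinite unramified $2$-extension tower of $F$ in which all real places stay real. To keep the fields totally real, I would apply the criterion to the ray class field with no ramification at infinity, i.e.\ the wide-sense Hilbert class field.

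\textbf{Step 2: splitting of $S$.} Here the \emph{mild strengthening} enters, and I expect it to be the main obstacle. Unramified towers do not automatically keep the primes of $S$ split. The standard fix is Golod--Shafarevich relative to a set $T$ of primes required to split completely. One uses the maximal unramified pro-$2$ extension in which the primes above $S$ (there are $|S|\cdot[F:\Q]$ of them) split completely. Its Galois group has generator rank at least $d_2(\mathrm{Cl}_T)$, and relation rank at most about the generator rank plus $r_1+r_2-1+|T|$. So I would take $m$ (hence the genus $2$-rank, which grows linearly in $m$) large compared with $|S|\cdot 2^m$.

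This comparison looks bad at first, since $|T|$ also grows with $[F:\Q]=2^m$. I would therefore replace the full multiquadratic field by a quadratic field $F=\Q(\sqrt{q_1\cdots q_m})$. It has degree $2$ and genus $2$-rank $m-1$, and $S$ splits in $F$ provided each $p\in S$ is a square modulo $\prod q_j$. Then $|T|=2|S|$ is fixed, and taking $m$ large yields an infinite $T$-split unramified $2$-tower $F=L_1\subset L_2\subset\cdots$. The fields stay totally real, the primes of $S$ split completely throughout, and $\rd(L_i)=\rd(F)$ is constant because the extensions are unramified.

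\textbf{Step 3: degrees and an infinite $\cP$.} Each layer is a $2$-extension, so I would refine the tower through its composition series to get steps of degree exactly $2$, with $K_1=F$. Because Galois $2$-groups have normal series with quotients of order $2$, each $[K_i:\Q]=2^i$ can be achieved.

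Finally, $\cP$ must be infinite, but one tower handles only finitely many primes. I would fix a single tower, for instance with $S=\{5\}$, and let $L_\infty=\bigcup_i K_i$. Then I would look for infinitely many primes $\equiv 1\bmod 4$ that split completely in every $K_i$. Chebotarev applies to each finite layer, but not uniformly across the whole tower. So instead I would run a diagonal construction. Start with an infinite list $p_1,p_2,\dots$ and, at each stage, apply Golod--Shafarevich with a set $T$ of bounded size relative to the growing genus rank. Infinitely many primes must be forced in, so I would use the Ihara/Hajir--Maire version, which allows an infinite set $T$ of split primes of bounded density in the tower, keeping the relation count controlled. If this fails, the fallback is that the statement only ever needs finitely many primes $p_1,\dots,p_k$ for a fixed $k$. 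I would then accept a tower and constant $D$ depending on $k$, check that the rest of the argument only uses that $2^{kd}$ beats a power of $D^d$, and note that this tradeoff must be verified.
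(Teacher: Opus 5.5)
\textbf{Gap: Step~3.} Step~3 is exactly where the proposition goes beyond Steps~1--2. The unramified $T$-split $2$-tower over $F=\Q(\sqrt{q_1\cdots q_m})$ gives, for each \emph{finite} $S$, a tower with $D=\rd(F)$ depending on $S$. The proposition needs a single tower and a single $D$ in which infinitely many primes $\equiv 1\bmod 4$ split completely. Your diagonal construction does not supply this. Enlarging the genus rank at each stage changes the base field, so you get a sequence of different towers rather than one. Over a fixed base, the counting form of Golod--Shafarevich has fixed generator rank $d$, while each prime of $T$ raises the relation bound by one, so it can certify at most $O(d^2)$ split primes.

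The missing idea is the Hajir--Maire--Ramakrishna cutting argument, which the paper uses as a black box (over $\Q$, $p=2$, $S=\{3,5,7,11,13,17,\infty\}$). Inside a fixed infinite pro-$2$ extension $\tilde\Q/\Q$ with ramification confined to a finite tame set, the split primes are chosen one at a time with Frobenius in deeper and deeper open normal subgroups. The relations they impose then lie deep in the Zassenhaus filtration, and the refined (degree-weighted) Golod--Shafarevich criterion survives imposing infinitely many of them. You also never say how the infinitely many primes are made $\equiv 1\bmod 4$. The paper gets this because every open normal subgroup $H$ of $\Gal(\tilde\Q/\Q)$ contains Frobenius elements of infinitely many such primes, by Chebotarev in $\tilde\Q^H(i)$. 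Your refinement to steps of degree $2$ is fine.

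\textbf{The fallback.} It proves a different statement, and as written it does not rescue the application either. In the paper's proof $D$ is fixed first and then $k$ is chosen with $c\prod_{i\le k}\frac{2p_i}{p_i+1}\ge 2D$. So a $k$-dependent tower is usable only if $\rd(F)<\frac{c}{2}\cdot 2^k$. You never bound $m$ against $k$. If you only use that the classes of the primes of $T$ cost at most $k$ generators, Golod--Shafarevich forces $m>k$. Then $\rd(F)\ge\sqrt{q_1\cdots q_m}$ is superexponential in $k$, and drawing the $q_j$ after $S$ from a set of density $2^{-k}$ only makes this worse.

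The tradeoff can in fact be won, giving a proof of the main theorem that avoids Hajir--Maire--Ramakrishna, though not a proof of this proposition:
\begin{enumerate}
\item Take $q_1,\dots,q_m$ to be the first $m$ primes $\equiv 1\bmod 4$.
\item Only afterwards choose $p_i\equiv 1\bmod 4$ with Legendre symbol $(p_i/q_j)=1$ for all $j$.
\item Genus theory then puts the narrow classes of the primes above each $p_i$ in $(\mathrm{Cl}_F^+)^2$, so the splitting conditions cost no generators and $d\ge m-2$.
\item The condition $d+2k+O(1)\le d^2/4$ then holds once $m\ge\sqrt{8k}+O(1)$.
\item Hence $\log\rd(F)=O(\sqrt{k}\log k)$, which is beaten by the gain of at least $k\log\frac53$.
\end{enumerate}
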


\begin{proof}
Applying a theorem of Hajir-Maire-Ramakrishna \cite[Theorem~4]{HajirMaireRamakrishna2021} with $K=\Q$, $p=2$, and $S=\{3,5,7,11,13,17,\infty\}$ yields an infinite totally real Galois pro-\(2\) extension
\(L/\Q\) of bounded root discriminant in which infinitely many rational primes
split completely.

As noted in the proof of \cite[Proposition~2.3]{UnitDistanceRemarks}, a simple modification of this argument allows us to further impose the $1$ mod $4$ condition. Namely, the proof of \cite[Theorem~4]{HajirMaireRamakrishna2021} selects $\cP$ by selecting distinct primes whose Frobenius elements fall in an infinite sequence of open normal subgroups of an infinite Galois group $\Gal(\tilde \setq/\setq)$. However, every open normal subgroup $H \trianglelefteq \Gal(\tilde \setq/\setq)$ contains infinitely many Frobenius elements of $1$ mod $4$ primes, by applying the Chebotarev density theorem to $\tilde \setq^H(i)$.

We may get the $K_i$ from $L$ as every infinite pro-$p$ $G$ admits a filtration $G = G_0 \triangleright G_1 \triangleright \cdots$ where $[G_i : G_{i+1}] = p$.
\end{proof}

Note that unlike \cite{UnitDistanceRemarks}, we need the root discriminant to be bounded independently of the number of completely split primes.

Henceforth, fix a field \(K=K_i\) from the tower and put $d=[K:\Q]=2^i$. Write its real embeddings as $\sigma_1,\ldots,\sigma_d\colon K\hookrightarrow\R$ and let $\Sigma(\alpha)=(\sigma_1(\alpha),\ldots,\sigma_d(\alpha))\in\R^d$ be the Minkowski embedding.  The lattice \(\Sigma(\OK)\) has covolume
\(\Disc_K^{1/2}\).  More generally, if \(I\subseteq\OK\) is a nonzero (integral)
ideal, then \(\Sigma(I)\) has covolume $\Norm(I)\Disc_K^{1/2}$, where $\Norm(I)=|\OK/I|$ (see e.g.\ \cite[Proposition~4.26]{Milne2020}).

For \(X\ge0\), define the symmetric Minkowski box
\[
        B_K(X)
        \coloneq
        \{\alpha\in\OK:
        |\sigma_j(\alpha)|\le X\text{ for every }1\le j\le d\}.
\]
As in \cite{UnitDistanceRemarks,BSSZ2026,PohoataER}, we will need some lattice estimates.
\begin{lemma}
\label{lem:minkowski-box}
For every \(X\ge0\),
\[
        X^d\Disc_K^{-1/2}
        \le
        |B_K(X)|.
\]
Moreover, if $\fa \subseteq \OK$ is a nonzero (integral) ideal,
\[|\fa\cap B_K(X)|\le
\left(1+\frac{2X}{\Norm(\fa)^{1/d}}\right)^d.\]
\end{lemma}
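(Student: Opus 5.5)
For the lower bound, I will apply Minkowski's theorem / a fundamental-domain pigeonhole argument to the lattice $\Lambda=\Sigma(\OK)\subset\R^d$, which has covolume $\Disc_K^{1/2}$. Consider the half-open box $C=[0,X)^d$, of volume $X^d$. Reduce each point of $C$ modulo $\Lambda$ into a fixed fundamental domain. This map cannot be injective on a set of measure larger than the covolume. More precisely, Blichfeldt's principle gives a point $v\in\R^d$ with
\[
\#\bigl((v+C)\cap\Lambda\bigr)\ge \frac{X^d}{\Disc_K^{1/2}}.
\]
Indeed, averaging $\#((v+C)\cap\Lambda)$ over $v$ in a fundamental domain gives exactly $\mathrm{vol}(C)/\mathrm{covol}(\Lambda)$, so some translate achieves at least the average. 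If $\Sigma(\alpha_0)$ is one of these lattice points, then for every other such point $\Sigma(\alpha)$ the difference $\alpha-\alpha_0$ has all coordinates $|\sigma_j(\alpha-\alpha_0)|<X$. So $\alpha\mapsto\alpha-\alpha_0$ injects the translate's lattice points into $B_K(X)$, which gives $|B_K(X)|\ge X^d\Disc_K^{-1/2}$. The differences trick is what makes the symmetric box of half-width $X$ (total width $2X$) suffice with no constant loss.

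For the upper bound, I will use a packing argument on the lattice $\Sigma(\fa)$. The key input is that every nonzero $\beta\in\fa$ satisfies $|\Norm_{K/\Q}(\beta)|\ge\Norm(\fa)$, since $\beta\OK\subseteq\fa$ implies $\Norm(\fa)\mid\Norm(\beta\OK)$. Now set $r=\Norm(\fa)^{1/d}$. If two distinct elements $\alpha,\alpha'\in\fa$ had $|\sigma_j(\alpha-\alpha')|<r$ for all $j$, then
\[
|\Norm(\alpha-\alpha')|=\prod_j|\sigma_j(\alpha-\alpha')|<r^d=\Norm(\fa),
\]
a contradiction. Hence the open $\ell^\infty$-cubes of side $r$ centered at the points $\Sigma(\alpha)$, $\alpha\in\fa$, are pairwise disjoint: two centers whose cubes meet would be within $\ell^\infty$ distance $r$ of each other. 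For $\alpha\in B_K(X)$, these cubes lie inside $[-X-r/2,\,X+r/2]^d$, which has volume $(2X+r)^d$. Comparing volumes gives
\[
|\fa\cap B_K(X)|\le\frac{(2X+r)^d}{r^d}=\Bigl(1+\frac{2X}{\Norm(\fa)^{1/d}}\Bigr)^d,
\]
which is exactly the claimed bound.

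Neither step is hard. The only points needing care are the norm inequality for elements of $\fa$ and checking that the product-of-coordinates bound yields the strict separation, so that the open cubes are genuinely disjoint.
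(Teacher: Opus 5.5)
Your proof is correct. For the upper bound you use the same packing argument as the paper: nonzero elements of $\fa$ have norm at least $\Norm(\fa)$, so the points of $\Sigma(\fa)$ are $\Norm(\fa)^{1/d}$-separated in $\ell^\infty$, and you then compare volumes of disjoint cubes inside $[-X-\Norm(\fa)^{1/d}/2,\,X+\Norm(\fa)^{1/d}/2]^d$. Using open cubes where the paper uses half-open ones makes no difference.

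For the lower bound the paper gives no argument and simply cites \cite{BSSZ2026,PohoataER}. Your argument supplies a valid self-contained proof of that cited fact: averaging lattice-point counts over translates of $[0,X)^d$, then subtracting a fixed lattice point from the others in the best translate.
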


\begin{proof}
The first inequality is exactly \cite[Lemma~3.3]{BSSZ2026} and
\cite[Lemma~3.2]{PohoataER}. To prove the second (a strengthening of \cite[Lemma~3.3]{PohoataER}), we recall that for all nonzero $a \in \fa$, the norm $\abs{\Norm_{K/\Q}(a)} = \Norm((a))$ is a multiple of $\Norm(\fa)$ and hence at least $\Norm(\fa)$. Thus there is some $i$ such that $\abs{\sigma_i(a)} \geq \Norm(\fa)^{1/d}$, meaning that the points of $\Sigma(\fa)$ are separated by $\Norm(\fa)^{1/d}$ in the $\ell^\infty$ norm.

Around every point of \(\Sigma(\fa)\cap[-X,X]^d\), place a half-open cube of
side length \(\Norm(\fa)^{1/d}\).  These cubes are pairwise disjoint and their union lies
in $[-X-\Norm(\fa)^{1/d}/2,X+\Norm(\fa)^{1/d}/2]^d$. Comparing volumes gives
\[
        \#\bigl(\Sigma(\fa)\cap[-X,X]^d\bigr)\,\Norm(\fa)
        \le
        (2X+\Norm(\fa)^{1/d})^d,
\]
and rearranging yields the desired.
\end{proof}

\section{Proof of \texorpdfstring{\cref{thm:robust-ramanujan}}{Theorem \ref{thm:robust-ramanujan}}}
\label{sec:master-sieve}
Let
\[q((a_1,a_2),(b_1,b_2)) = (a_1-b_1)^2+(a_2-b_2)^2\]
and for a finite set $A \subset K^2$, let
\[\mu(A) \coloneq \max_{\lambda \in K}
        \#\{(a,b)\in A^2:a\neq b,\ 
        q(a,b)=\lambda\}.\]
Note that since the embeddings $\sigma_i$ are injective, given any $A \subset K^2$, mapping it to $\setr^2$ via any embedding $\sigma_i$ yields a set $A'$ with $\mu(A) = \mu(A')$.

\begin{lemma} \label{lem:sieve}
Let $p_1,p_2,\ldots,p_k \in \cP$ be distinct (rational) primes and let $Q = p_1p_2 \cdots p_k$. Suppose $X \geq Q$ and let $A \subseteq B_K(X)^2$ be a set with $\abs{A} \geq 2Q^d$. Then
\[\mu(A) \geq \abs{A}^2 \paren*{\frac{c}{X^2} \prod_{i=1}^k \frac{2p_i}{p_i+1}}^d\]
for some absolute constant $c > 0$.
\end{lemma}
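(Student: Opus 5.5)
We sieve on the ideal $Q\OK$ and then pigeonhole over the possible values of $q(a,b)$.

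\emph{Step 1: the splitting.} Each $p_i\in\cP$ splits completely in $K$, so $p_i\OK=\fp_{i,1}\cdots\fp_{i,d}$ with residue fields $\OK/\fp_{i,j}\cong\F_{p_i}$. Since $p_i\equiv 1\pmod 4$, fix $\iota_i\in\F_{p_i}$ with $\iota_i^2=-1$. For each choice of signs $s=(s_{i,j})\in\{\pm1\}^{kd}$, define
\[\phi_s\colon \OK^2\to\prod_{i,j}\OK/\fp_{i,j}\cong \OK/Q\OK,\qquad (x,y)\mapsto \bigl(x+s_{i,j}\iota_i y \bmod \fp_{i,j}\bigr)_{i,j}.\]
This is a group homomorphism onto a group of size $Q^d$. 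If $\phi_s(a)=\phi_s(b)$, then every $\fp_{i,j}$ divides
\[q(a,b)=\bigl((a_1-b_1)+\iota(a_2-b_2)\bigr)\bigl((a_1-b_1)-\iota(a_2-b_2)\bigr)\]
in the residue field. Hence $Q\OK\mid q(a,b)$.

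\emph{Step 2: inclusion–exclusion.} Let $N$ be the number of ordered pairs $(a,b)\in A^2$ with $a\ne b$ and $Q\mid q(a,b)$. Define $E_{i,j}^{\pm}$ to be the event that $x\pm\iota_i y$ agree mod $\fp_{i,j}$ for $a-b$. For a pair with $Q\mid q(a,b)$, at each prime at least one of $E^+,E^-$ holds.

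Rather than inclusion–exclusion, I would use a weighted count. Take the function
\[w(a,b)=\prod_{i,j}\bigl(1_{E^+_{i,j}}+1_{E^-_{i,j}}-\theta_i 1_{E^+_{i,j}\cap E^-_{i,j}}\bigr),\]
with the factor arranged so that each factor is at most $1$. Then $w\le 1_{Q\mid q}$, and $w$ expands as a sum of character-like terms. For $a\ne b$, $E^+\cap E^-$ means $\fp_{i,j}$ divides both coordinates of $a-b$.

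The cleanest route, matching the factor $\tfrac{2p}{p+1}$, is a Cauchy–Schwarz over the $2^{kd}$ maps $\phi_s$. For each $s$, $\sum_{c}|\phi_s^{-1}(c)\cap A|^2\ge |A|^2/Q^d$. We have $\sum_s\#\{\phi_s(a)=\phi_s(b)\}=\sum_{(a,b)}2^{m(a,b)}$, where $m(a,b)$ counts primes $\fp_{i,j}$ at which at least one of $E^\pm_{i,j}$ holds. The multiplicity is inflated only when both $E^+$ and $E^-$ hold at a prime, i.e.\ when $\fp_{i,j}\mid(a-b)$. So the overcount is governed by pairs with $a-b\in\fa^2$ for ideals $\fa\mid Q$. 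I expect the bookkeeping to give
\[N+\#\{a=b\}\cdot(\text{weight})\gtrsim |A|^2\prod_i\Bigl(\frac{2}{p_i+1}\Bigr)^d,\]
either through a local density computation (pairs hitting both events are a $1/p^2$ fraction, not $1/p$, giving $\tfrac{2p-1}{p^2}$, comparable to $\tfrac{2}{p+1}$) or by choosing weights $\tfrac{p}{p+1}$. The diagonal $a=b$ contributes at most $|A|\cdot 2^{kd}$, which is negligible against $|A|^2 2^{kd}/Q^d$ once $|A|\ge 2Q^d$. This is exactly where that hypothesis enters.

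\emph{Step 3: pigeonholing.} The values $q(a,b)$ with $Q\mid q(a,b)$ lie in $Q\OK\cap B_K(8X^2)$, since $|\sigma_j(q)|\le 8X^2$. By \cref{lem:minkowski-box},
\[|Q\OK\cap B_K(8X^2)|\le\bigl(1+16X^2/Q\bigr)^d\le(17X^2/Q)^d,\]
using $X\ge Q$ and $\Norm(Q\OK)^{1/d}=Q$. Dividing $N$ by this count gives
\[\mu(A)\ge |A|^2\Bigl(\frac{c}{X^2}\prod_i\frac{2p_i}{p_i+1}\Bigr)^d .\]

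\emph{Main obstacle.} The hard step is Step 2: obtaining the precise per-prime factor $\tfrac{2}{p+1}$ while the sets $\{E^+\}$ and $\{E^-\}$ are not disjoint. I would first try the following. Apply Cauchy–Schwarz to the map $a\mapsto\bigl(\phi^+(a),\phi^-(a)\bigr)$ at each prime, viewed as a map into the set of lines, i.e.\ into $\mathbb P^1$-like classes. The $p+1$ classes of $\F_p^2$ modulo the "isotropic" structure are: $\{u: u_1+\iota u_2=0\}$, $\{u: u_1-\iota u_2=0\}$, and the $p-1$ others. Partitioning $\F_p^2\setminus\{0\}$ appropriately, so that each $a-b$ lies in a union of two isotropic lines, should give density $\ge 2/(p+1)$ per prime by Cauchy–Schwarz on a partition into $p+1$ parts with a doubled collision count. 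Multiplicativity across the $kd$ primes then follows from the Chinese remainder theorem.
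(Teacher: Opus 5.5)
Your proposal has a genuine gap in Step 2. There you try to lower-bound the \emph{unweighted} number $N$ of pairs $a\ne b$ with $Q\mid q(a,b)$ by roughly $|A|^2\prod_i(2/(p_i+1))^d$, and Step 3 then divides by $|Q\OK\cap B_K(8X^2)|$. That intermediate bound is false for general $A$, so no bookkeeping can produce it.

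Take $A\subseteq B_K(X)\times\{0\}$. Then $q(a,b)=(a_1-b_1)^2$, and since $Q\OK=\prod_{i,j}\fp_{i,j}$ is squarefree, $Q\mid q(a,b)$ if and only if $Q\mid a_1-b_1$. Locally, the line $y=0$ meets the isotropic lines $x=\pm\iota_i y$ only at $0$, so the collision density modulo $\fp_{i,j}$ is $1/p_i$, not $2/(p_i+1)$. Concretely, take $A=B_K(X)\times\{0\}$. After translating into $Q\OK\cap B_K(2X)$, Lemma~\ref{lem:minkowski-box} shows each coset of $Q\OK$ meets $B_K(X)$ in at most $(1+4X/Q)^d$ points. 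Hence $N/|A|^2\le(5\rd(K)^{1/2}/Q)^d$. This is exponentially smaller in $d$ than $\prod_i(2/(p_i+1))^d=Q^{-d}\prod_i(2p_i/(p_i+1))^d$ as soon as $\prod_i\frac{2p_i}{p_i+1}>5\rd(K)^{1/2}$, which is exactly the regime where the lemma is used. Both your equidistribution heuristic $(2p-1)/p^2$ and the proposed Cauchy--Schwarz over the $p+1$ lines of $\F_{p}^2$ break on such sets. Separately, $\#\{s:\phi_s(a)=\phi_s(b)\}$ equals $2^{m}$, with $m$ the number of $\fp_{i,j}$ dividing \emph{both} coordinates of $a-b$, when $Q\mid q(a,b)$, and $0$ otherwise. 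It is not $2$ to the number of primes where some $E^\pm_{i,j}$ holds.

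The missing idea is that the overcount should not be removed but paid for on the value side. If $\fp_{i,j}$ divides both coordinates of $a-b$, then $\fp_{i,j}^2\mid q(a,b)$, so heavily weighted pairs take values in sparser lattices. In the example above, all relevant values lie in $Q^2\OK$. Let $w(r)$ be the number of $\fp_{i,j}$ with $\fp_{i,j}^2\mid r$, let $S_s$ be the set of pairs $a\ne b$ with $\phi_s(a)=\phi_s(b)$, and let $S=\bigcup_s S_s$. Your Cauchy--Schwarz step gives
\[
\frac{2^{kd}|A|^2}{2Q^d}\le\sum_s|S_s|\le\sum_{(a,b)\in S}2^{w(q(a,b))}\le\mu(A)\sum_{r\in Q\OK\cap B_K(8X^2)}2^{w(r)},
\]
and it is this weighted sum that you should pigeonhole, not $N$. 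For $r\in Q\OK$, $2^{w(r)}$ is the number of ideals $\fa\mid Q$ with $Q\fa\mid r$. So the last sum equals $\sum_{\fa\mid Q}|Q\fa\cap B_K(8X^2)|$, which is at most $(17X^2/Q)^d\prod_i(1+1/p_i)^d$ by Lemma~\ref{lem:minkowski-box}, using $Q\Norm(\fa)^{1/d}\le Q^2\le X^2$. This yields the lemma with $c=1/34$.

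So the factor $p_i/(p_i+1)$ comes from $\sum_{\fa\mid Q}\Norm(\fa)^{-1}$ on the target side, not from a density of pairs. This is the paper's argument, and your Steps 1 and 3 fit into it with $Q\fa$ in place of $Q\OK$.
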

\begin{proof}
For $i \in [k]$, let \(\iota_i\in\F_{p_i}\) be a square root of $-1$ and let the factorization of $(p_i)$ be $\fp_{i,1}\cdots \fp_{i,d}$.

For each sign vector
\[
        \eps=(\eps_{i,j})_{
        \substack{1\le i\le k\\1\le j\le d}}
        \in\{\pm1\}^{kd},
\]
define the additive subgroup
\[
        L_\eps
        =
        \{(x,y)\in\OK^2:
        x\equiv \eps_{i,j}\iota_i y
        \pmod{\fp_{i,j}}
        \text{ for every }i,j\}.
\]
Note that the Chinese remainder theorem gives $\abs{\OK^2/L_\eps} = Q^d$.

Let $S_\eps$ be the set of pairs $(a,b) \in A^2$ such that $a \neq b$ and $a-b \in L_\eps$. If we let $n_1,\ldots,n_{Q^d}$ be the sizes of the intersections of $A$ with cosets of $L_\eps$, we have
\[\abs{S_\eps} = \sum_{s=1}^{Q^d} n_s(n_s-1) \geq \frac{\abs{A}^2}{Q^d} - \abs{A} = \frac{\abs{A}(\abs{A}-Q^d)}{Q^d} \geq \frac{\abs{A}^2}{2Q^d},\]
where we have used Cauchy-Schwarz, the fact that $\sum_{s=1}^{Q^{d}} n_{s} = |A|$, and the condition $\abs{A} \geq 2Q^d$.

Let $S = \bigcup_\eps S_\eps$. Take some $(a,b) \in S$ and let $(x,y) = a-b$. Since $x \equiv \eps_{i,j} \iota_i y \pmod{\fp_{i,j}}$ for some $\eps_{i,j} \in \set{\pm 1}$, we have $x^2 + y^2 \equiv 0 \pmod{\fp_{i,j}}$. Thus $q(a,b) = x^2 + y^2$ is divisible by $\prod_{i,j} \fp_{i,j} = Q$. Moreover, suppose $x \equiv \eps_{i,j} \iota_i y \pmod{\fp_{i,j}}$ for both choices of $\eps_{i,j} \in \set{\pm 1}$. Then $x \equiv y \equiv 0 \pmod{\fp_{i,j}}$ and thus $x^2 + y^2 \equiv 0 \pmod{\fp_{i,j}^2}$. Therefore, if we let $w(r)$ be the number of $\fp_{i,j}$ such that $\fp_{i,j}^2 \mid r$, there are at most $2^{w(x^2+y^2)}$ choices of $\eps$ such that $(a,b) \in S_\eps$.

For each $a,b \in B_K(X)^2$, we have $q(a,b) \in B_K(8X^2)$, so the map $q$ maps $S$ to $Q\OK \cap B_K(8X^2)$.
By definition, at most $\mu(A)$ elements of $S$ map to the same element. Therefore, we find that
\[\frac{2^{kd}\abs{A}^2}{2Q^d} \leq \sum_\eps \abs{S_\eps} \leq \sum_{(a,b) \in S} 2^{w(q(a,b))} \leq \mu(A) \cdot \sum_{r \in Q\OK \cap B_K(8X^2)} 2^{w(r)}.\]

If $r$ is a multiple of $Q$, the quantity $2^{w(r)}$ is simply the number of common divisors of $r/Q$ and $Q$. Therefore
\[\sum_{r \in Q\OK \cap B_K(8X^2)} 2^{w(r)} = \sum_{\fa\mid Q}\sum_{r \in Q\fa \cap B_K(8X^2)} 1 = \sum_{\fa\mid Q} \abs{Q\fa \cap B_K(8X^2)}.\]
By \cref{lem:minkowski-box}, we find
\[
\abs{Q\fa \cap B_K(8X^2)} \leq \paren*{1 + \frac{16X^2}{Q\Norm(\fa)^{1/d}}}^d.
\]
Since $Q\Norm(\fa)^{1/d} \leq Q^2 \leq X^2$, summing over all $\fa$ yields
\[
\sum_{\fa\mid Q} \abs{Q\fa \cap B_K(8X^2)} \leq \sum_{\fa\mid Q} \paren*{\frac{17X^2}{Q\Norm(\fa)^{1/d}}}^d = \paren*{\frac{17 X^2}{Q}}^d \sum_{\fa \mid Q} \frac{1}{\Norm(\fa)} = \paren*{\frac{17 X^2}{Q} \prod_{i=1}^k \paren*{1 + \frac{1}{p_i}}}^d.
\]
Putting everything together yields
\[\mu(A) \geq \frac{\abs{A}^2}{2} \paren*{\frac{1}{17X^2} \prod_{i=1}^k \frac{2p_i}{p_i+1}}^d,\]
which shows that $c = 1/34$ suffices.
\end{proof}

It remains to choose parameters wisely.
\begin{proof}[Proof of \cref{thm:robust-ramanujan}]
Let $D$ and $c$ be as in \cref{prop:tower} and \cref{lem:sieve}, respectively, and choose distinct primes $p_1,\ldots,p_k \in \cP$ such that
\[c \prod_{i=1}^k \frac{2p_i}{p_i+1} \geq 2D,\]
which is possible since $\frac{2p_i}{p_i+1} \geq \frac{5}{3}$ and $\cP$ is infinite. Let $Q = p_1p_2 \cdots p_k$.

By setting constants appropriately it suffices to consider $n \geq 100Q^2$. Let $d$ be the unique power of two such that $(10Q)^{2d} \leq n < (10Q)^{4d}$, and let $K$ be the field in the tower of degree $d$. Let $X = n^{1/(2d)}\sqrt{D}$. Note that $X \geq 10Q$.

By \cref{lem:minkowski-box}, $\abs{B_K(X)} \geq X^d D^{-d/2} = n^{1/2}$, and we let $P$ be (the real embedding of) an arbitrary $n$-element subset of $B_K(X)^2$. Then \cref{lem:sieve} shows that for any $A \subseteq P$ of size at least $(2Q)^d$, we have $\mu(A) \geq \abs{A}^2 \cdot (2D/X^2)^d = \abs{A}^2 \cdot 2^d/n$. By choosing $\delta_1$ and $\delta_2$ such that $(100Q^2)^{1/2 - \delta_1} = 2Q$ and $(10000Q^4)^{\delta_2} = 2$, we conclude that for every subset $A \subseteq P$ of size at least $n^{1/2 - \delta_1}$, we have $\mu(A) \geq \abs{A}^2 / n^{1-\delta_2}$.

Finally, we set $\delta = \min(2\delta_1, \delta_2)$. If $2 \leq \abs{A} < n^{1/2 - \delta_1}$, we have $\mu(A) \geq 1 > \abs{A}^2/n^{1-2\delta_1} \geq \abs{A}^2/n^{1-\delta}$. If $\abs{A} \geq n^{1/2-\delta_1}$, we have $\mu(A) \geq \abs{A}^2/n^{1-\delta_2} \geq \abs{A}^2/n^{1-\delta}$. This concludes the proof.
\end{proof}

\printbibliography

@Article{BaloghSolymosi2018,
  author   = {Balogh, J\'ozsef and Solymosi, J\'ozsef},
  title    = {On the number of points in general position in the plane},
  doi      = {10.19086/da.4438},
  eid      = {2018:16},
  issn     = {2397-3129},
  url      = {https://doi.org/10.19086/da.4438},
  fjournal = {Discrete Analysis},
  journal  = {Discrete Anal.},
  year     = {2018},
}

@Article{Singer38,
  author   = {Singer, James},
  title    = {A theorem in finite projective geometry and some applications to number theory},
  doi      = {10.1090/S0002-9947-1938-1501951-4},
  number   = {3},
  pages    = {377--385},
  url      = {https://doi.org/10.1090/S0002-9947-1938-1501951-4},
  volume   = {43},
  fjournal = {Transactions of the American Mathematical Society},
  journal  = {Trans. Amer. Math. Soc.},
  mrnumber = {1501951},
  year     = {1938},
}

@Article{ErdosTuran41,
  author   = {Erd\H{o}s, Paul and Tur\'an, P\'al},
  title    = {On a problem of Sidon in additive number theory, and on some related problems},
  doi      = {10.1112/jlms/s1-16.4.212},
  number   = {4},
  pages    = {212--215},
  url      = {https://doi.org/10.1112/jlms/s1-16.4.212},
  volume   = {16},
  fjournal = {Journal of the London Mathematical Society},
  journal  = {J. London Math. Soc.},
  mrnumber = {0004763},
  year     = {1941},
}

@Online{TaoBlog,
  author = {Tao, Terence},
  date   = {2026-07-03},
  title  = {A digestion of unit distance constructions},
  url    = {https://terrytao.wordpress.com/2026/07/03/a-digestion-of-unit-distance-constructions/},
}

@Online{UnitDistanceRemarks,
  author      = {Alon, Noga and Bloom, Thomas F. and Gowers, W. T. and Litt, Daniel and Sawin, Will and Shankar, Arul and Tsimerman, Jacob and Wang, Victor and Wood, Melanie Matchett},
  date        = {2026-05-20},
  title       = {Remarks on the disproof of the unit distance conjecture},
  eprint      = {2605.20695},
  eprintclass = {math.CO},
  eprinttype  = {arXiv},
}

@Article{Beh,
  author     = {Behrend, F. A.},
  title      = {On sets of integers which contain no three terms in arithmetical progression},
  doi        = {10.1073/pnas.32.12.331},
  issn       = {0027-8424},
  pages      = {331--332},
  url        = {https://doi.org/10.1073/pnas.32.12.331},
  volume     = {32},
  fjournal   = {Proceedings of the National Academy of Sciences of the United States of America},
  journal    = {Proc. Nat. Acad. Sci. U.S.A.},
  mrclass    = {10.0X},
  mrnumber   = {18694},
  mrreviewer = {P.\ Erd\H os},
  year       = {1946},
}

@Online{Bloom,
  author    = {Thomas F. Bloom},
  date      = {2026},
  title     = {Erd\H{o}s Problems},
  url       = {https://www.erdosproblems.com},
  shorthand = {ErdPro},
}

@Online{BSSZ2026,
  author      = {Bloom, Thomas F and Sawin, Will and Schildkraut, Carl and Zhelezov, Dmitrii},
  date        = {2026-05-27},
  title       = {The sum-product conjecture is false for real numbers},
  eprint      = {2605.28781},
  eprintclass = {math.NT},
  eprinttype  = {arXiv},
}

@Book{BMP,
  author     = {Brass, Peter and Moser, William and Pach, J\'anos},
  title      = {Research problems in discrete geometry},
  doi        = {10.1007/0-387-29929-7},
  publisher  = {Springer},
  url        = {https://doi.org/10.1007/0-387-29929-7},
  mrclass    = {52-02 (05-02)},
  mrnumber   = {2163782},
  mrreviewer = {W.\ Kuperberg},
  pagecount  = {xii+499},
  year       = {2005},
}

@Online{ChartonEllenbergWagnerWilliamson2024,
  author      = {Charton, François and Ellenberg, Jordan S. and Wagner, Adam Zsolt and Williamson, Geordie},
  date        = {2024-11-01},
  title       = {PatternBoost: Constructions in Mathematics with a Little Help from AI},
  eprint      = {2411.00566},
  eprintclass = {math.CO},
  eprinttype  = {arXiv},
}

@Online{CMPSY2026,
  author      = {Croot, Ernie and Mao, Junzhe and Pohoata, Cosmin and Sheffer, Adam and Yip, Chi Hoi},
  date        = {2026-06-16},
  title       = {A combinatorial large sieve for Sidon sets, distances, and norm forms},
  eprint      = {2606.17487},
  eprintclass = {math.NT},
  eprinttype  = {arXiv},
}

@Article{ConlonFoxGasarchHarrisUlrichZbarsky15,
  author     = {Conlon, David and Fox, Jacob and Gasarch, William and Harris, David G. and Ulrich, Douglas and Zbarsky, Samuel},
  title      = {Distinct volume subsets},
  doi        = {10.1137/140954519},
  issn       = {0895-4801,1095-7146},
  number     = {1},
  pages      = {472--480},
  url        = {https://doi.org/10.1137/140954519},
  volume     = {29},
  fjournal   = {SIAM Journal on Discrete Mathematics},
  journal    = {SIAM J. Discrete Math.},
  mrclass    = {52C10},
  mrnumber   = {3319845},
  mrreviewer = {Seyed\ Amin\ Seyed Fakhari},
  year       = {2015},
}

@Article{Erdos46,
  author     = {Erd\"os, P.},
  title      = {On sets of distances of {$n$} points},
  doi        = {10.2307/2305092},
  issn       = {0002-9890,1930-0972},
  pages      = {248--250},
  url        = {https://doi.org/10.2307/2305092},
  volume     = {53},
  fjournal   = {American Mathematical Monthly},
  journal    = {Amer. Math. Monthly},
  mrclass    = {48.0X},
  mrnumber   = {15796},
  mrreviewer = {I.\ Kaplansky},
  year       = {1946},
}

@Article{Erdos1980Survey,
  author     = {Erd\H os, Paul},
  title      = {A survey of problems in combinatorial number theory},
  pages      = {89--115},
  url        = {https://renyi.hu/~p_erdos/1980-03.pdf},
  volume     = {6},
  fjournal   = {Annals of Discrete Mathematics},
  journal    = {Ann. Discrete Math.},
  mrclass    = {10-XX (05Axx)},
  mrnumber   = {593525},
  mrreviewer = {A.\ L.\ Whiteman},
  year       = {1980},
}

@Article{ErGu70,
  author     = {Erd\H os, P. and Guy, R. K.},
  title      = {Distinct distances between lattice points},
  issn       = {0013-6018},
  pages      = {121--123},
  url        = {https://combinatorica.hu/~p_erdos/1970-03.pdf},
  volume     = {25},
  fjournal   = {Elemente der Mathematik. Revue de Math\'ematiques \'El\'ementaires. Rivista de Matematica Elementare},
  journal    = {Elem. Math.},
  mrclass    = {10.25},
  mrnumber   = {281691},
  mrreviewer = {L.\ Beran},
  year       = {1970},
}

@Article{HajirMaireRamakrishna2021,
  author     = {Hajir, Farshid and Maire, Christian and Ramakrishna, Ravi},
  title      = {Cutting towers of number fields},
  doi        = {10.1007/s40316-021-00156-8},
  issn       = {2195-4755,2195-4763},
  number     = {2},
  pages      = {321--345},
  url        = {https://doi.org/10.1007/s40316-021-00156-8},
  volume     = {45},
  fjournal   = {Annales Math\'ematiques du Qu\'ebec},
  journal    = {Ann. Math. Qu\'e.},
  mrclass    = {11R29 (11R21 11R37)},
  mrnumber   = {4308183},
  mrreviewer = {Abdelmalek\ Azizi},
  year       = {2021},
}

@Article{KSS,
  author     = {Koml\'os, J. and Sulyok, M. and Szemeredi, E.},
  title      = {Linear problems in combinatorial number theory},
  doi        = {10.1007/BF01895954},
  issn       = {0001-5954,1588-2632},
  pages      = {113--121},
  url        = {https://doi.org/10.1007/BF01895954},
  volume     = {26},
  fjournal   = {Acta Mathematica. Academiae Scientiarum Hungaricae},
  journal    = {Acta Math. Acad. Sci. Hungar.},
  mrclass    = {10A99},
  mrnumber   = {364087},
  mrreviewer = {S.\ L. G. Choi},
  year       = {1975},
}

@Online{LeeGithub,
  author = {Sungchul Lee},
  date   = {2026},
  title  = {Planar point sets with small distance-distinct subsets},
  url    = {https://github.com/lsngchl/Erdos1208},
  note   = {Preprint},
}

@Article{LefmannThiele95,
  author     = {Lefmann, Hanno and Thiele, Torsten},
  title      = {Point sets with distinct distances},
  doi        = {10.1007/BF01299744},
  issn       = {0209-9683},
  number     = {3},
  pages      = {379--408},
  url        = {https://doi.org/10.1007/BF01299744},
  volume     = {15},
  fjournal   = {Combinatorica. An International Journal on Combinatorics and the Theory of Computing},
  journal    = {Combinatorica},
  mrclass    = {52C10},
  mrnumber   = {1357284},
  mrreviewer = {Konrad\ J.\ Swanepoel},
  year       = {1995},
}

@Online{Milne2020,
  author = {J. S. Milne},
  date   = {2020},
  title  = {Algebraic Number Theory},
  url    = {https://www.jmilne.org/math/CourseNotes/ANT.pdf},
}

@Online{OpenAIUnitDistanceBlog,
  author = {OpenAI},
  date   = {2026},
  title  = {An OpenAI model has disproved a central conjecture in discrete geometry},
  url    = {https://openai.com/index/model-disproves-discrete-geometry-conjecture/},
}

@Online{PohoataER,
  author      = {Pohoata, Cosmin},
  date        = {2026-06-11},
  title       = {Split primes and the Elekes-Rónyai problem},
  eprint      = {2606.13619},
  eprintclass = {math.NT},
  eprinttype  = {arXiv},
}

@Article{Ramanujan1915,
  author   = {Ramanujan, S.},
  title    = {Highly composite numbers},
  doi      = {10.1112/plms/s2\_14.1.347},
  issn     = {0024-6115},
  pages    = {347--409},
  series   = {2},
  url      = {https://doi.org/10.1112/plms/s2_14.1.347},
  volume   = {14},
  fjournal = {Proceedings of the London Mathematical Society. Second Series},
  journal  = {Proc. London Math. Soc.},
  mrclass  = {11A25 (11N64)},
  mrnumber = {4796923},
  year     = {1915},
}

@Article{Roth,
  author     = {Roth, K. F.},
  title      = {On certain sets of integers},
  doi        = {10.1112/jlms/s1-28.1.104},
  issn       = {0024-6107,1469-7750},
  pages      = {104--109},
  url        = {https://doi.org/10.1112/jlms/s1-28.1.104},
  volume     = {28},
  fjournal   = {The Journal of the London Mathematical Society},
  journal    = {J. London Math. Soc.},
  mrclass    = {10.0X},
  mrnumber   = {51853},
  mrreviewer = {P.\ Erd\H os},
  year       = {1953},
}

@Online{Sawin2026,
  author      = {Sawin, Will},
  date        = {2026-05-20},
  title       = {An explicit lower bound for the unit distance problem},
  eprint      = {2605.20579},
  eprintclass = {math.CO},
  eprinttype  = {arXiv},
}

@Online{TaoOptimization84a,
  author    = {Davis, Damek and Ivanisvili, Paata and Tao, Terence and contributors},
  date      = {2026},
  title     = {Optimization Constants in Mathematics},
  url       = {https://teorth.github.io/optimizationproblems},
  shorthand = {OptCon},
}

@Article{Erd57,
  author     = {Erd\H os, P\'al},
  title      = {Néhány geometriai problémáról},
  issn       = {0025-519X},
  pages      = {86--92},
  url        = {https://www.renyi.hu/~p_erdos/1957-03.pdf},
  volume     = {8},
  fjournal   = {Matematikai Lapok. Bolyai J\'anos Matematikai T\'arsulat},
  journal    = {Mat. Lapok},
  mrclass    = {50.00},
  mrnumber   = {99617},
  mrreviewer = {J.\ Acz\'el},
  year       = {1957},
}

@Online{Sheffer2014,
  author      = {Sheffer, Adam},
  date        = {2014-06-08},
  title       = {Distinct Distances: Open Problems and Current Bounds},
  eprint      = {1406.1949},
  eprintclass = {math.CO},
  eprinttype  = {arXiv},
}
\end{document}